\providecommand{\U}[1]{\protect\rule{.1in}{.1in}}
\newtheorem{theorem}{Theorem}
\newtheorem{corollary}[theorem]{Corollary}
\newtheorem{proposition}[theorem]{Proposition}
\newtheorem{remark}[theorem]{Remark}
\newenvironment{proof}[1][Proof]{\noindent\textbf{#1.} }{\ \rule{0.5em}{0.5em}}
\begin{document}
\thispagestyle{empty}
\title{\bf The $\varepsilon$-entropy of some infinite dimensional compact ellipsoids and  fractal dimension of attractors}\maketitle

\author{ \center  Mar\'ia ANGUIANO \\ Departamento de An\'alisis Matem\'atico. Facultad de Matem\'aticas. \\Universidad de Sevilla. \\ P. O. Box 1160, 41080-Sevilla (Spain).
\\ anguiano@us.es\\}
\medskip\author{ \center  Alain HARAUX (1, 2)\\ 1.  UPMC Univ Paris 06, UMR 7598, Laboratoire Jacques-Louis Lions, \\ F-75005,
Paris, France.\\{2- CNRS, UMR 7598, Laboratoire Jacques-Louis Lions, \\Bo\^{\i}te courrier 187,  75252 Paris Cedex 05,  France.\\
 haraux@ann.jussieu.fr\\}}

\vskip20pt

 \renewcommand{\abstractname} {\bf Abstract}
\begin{abstract} 
We prove an estimation of the Kolmogorov $\varepsilon$-entropy in $H$ of the unitary ball in the space $V$, where $H$ is a Hilbert space and $V$ is a Sobolev-like subspace of $H$. Then,  by means of Zelik's result \cite{Zelik_2000}, an estimate of the fractal dimension of the attractors of some nonlinear parabolic equations is established.
\end{abstract}
\bigskip\noindent

 {\small \bf AMS classification numbers:} 37L30, 35B41

\bigskip\noindent {\small \bf Keywords: }Fractal dimension; Attractors; Entropy.  \newpage
\section {Introduction}
Let $M$ be a precompact set in a metric space $X$. We recall the definition of the fractal dimension of $M$ (see, for instance, Temam \cite{Temam}). According to Hausdorff criteria the set $M$ can be covered by a finite number of $\varepsilon$-balls in $X$ for every $\varepsilon>0$. Denote by $N_{\varepsilon}(M,X)$ the minimal number of $\varepsilon$-balls in $X$ which cover $M$. Then the Kolmogorov $\varepsilon$-entropy of the set $M$ in $X$ is defined to be the following number
$$\mathcal{H}_{\varepsilon}(M,X)\equiv \log_2 N_{\varepsilon}(M,X),$$
and the fractal dimension of $M$ can be defined in the following way
\begin{eqnarray*}
dim_{F}(M)=dim_{F}(M,X)=\limsup_{\varepsilon \rightarrow 0^+}\frac{\mathcal{H}_{\varepsilon}(M,X)}{\log_2 \frac{1}{\varepsilon}}.
\end{eqnarray*}

In the present paper, we shall be dealing with estimates of the fractal dimension of the invariant sets (attractors) of the semigroups generated by infinite-dimensional dynamical systems. The usual way of estimating the fractal dimension of invariant sets involving the Liapunov exponents and $k$-contraction maps (see, for instance, Temam \cite{Temam}) requires the semigroup to be quasidifferentiable with respect to the initial data on the attractor. It is well known that the Hausdorff dimension is less than or equal to the fractal dimension. In this sense, in \cite{Chepy_Ilyin}, Chepyzhov and Ilyin show that the Hausdorff and fractal dimension have the same upper bound  generalizing to the infinite-dimensional case the method of Chen \cite{Chen}.

To avoid the  differentiability hypothesis,  Zelik, in \cite{Zelik_2000}, presents a new approach to estimate the dimension of invariant sets. The basic tool of his method is the following very general property.
\begin{theorem}[Zelik] \label{Zelik_2000}
Let $V$ and $H$ be Banach spaces, $V$ be compactly embedded in $H$ and let $K$ be a compact subset of $H$. Assume that there exists a map $L:K\rightarrow K$ such that $L(K)=K$ and the following `smoothing' property is valid
\begin{eqnarray}\label{C}
\left\Vert L(k_1)-L(k_2) \right\Vert_{V} \leq C \left\vert k_1-k_2 \right\vert_{H}, \text{   } \forall k_1,k_2 \in K, \text{ }C>0.
\end{eqnarray}
Then, the fractal dimension of $K$ in $H$ is finite and can be estimated in the following way:
\begin{eqnarray*}
dim_{F} (K,H) \leq \mathcal{H}_{\frac{1}{4C}} \left(B_{V}(0,1),H \right),
\end{eqnarray*}
where $C$ is the same as in (\ref{C}) and $B_{V}(0,1)$ means the unit ball centered at $0$ in the space $V$.
\end{theorem}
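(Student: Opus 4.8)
The plan is to argue by an iterated covering scheme, exploiting the surjectivity $L(K)=K$ together with the smoothing estimate (\ref{C}) to show that each halving of the covering radius costs a fixed multiplicative factor equal to $N_{\frac{1}{4C}}(B_{V}(0,1),H)$.

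First I would record the elementary scaling and translation invariance of covering numbers: if $B_{V}(0,1)$ is covered by $\mathcal{N}:=N_{\frac{1}{4C}}(B_{V}(0,1),H)$ balls of radius $\frac{1}{4C}$ in $H$, then for every $\lambda>0$ and every $w\in H$ the set $w+\lambda B_{V}(0,1)$ is covered by $\mathcal{N}$ balls of radius $\frac{\lambda}{4C}$ in $H$. Since $K$ is compact it is bounded, so there is $R>0$ with $N_{R}(K,H)=1$, which furnishes the starting point of the iteration.

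Next comes the key recursion. Suppose $K\subset\bigcup_{i=1}^{N}B_{H}(x_{i},r)$ with $N=N_{r}(K,H)$. After discarding empty intersections I would pick $k_{i}\in B_{H}(x_{i},r)\cap K$; the triangle inequality then gives $B_{H}(x_{i},r)\cap K\subset B_{H}(k_{i},2r)\cap K$, so one may assume the centres $k_{i}$ lie in $K$ at the price of doubling the radius. This recentring is exactly what makes (\ref{C}) applicable: for $k\in B_{H}(k_{i},2r)\cap K$ it yields $\left\Vert L(k)-L(k_{i})\right\Vert_{V}\le 2Cr$, hence $L\bigl(B_{H}(k_{i},2r)\cap K\bigr)\subset L(k_{i})+2Cr\,B_{V}(0,1)$. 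By the scaling remark with $\lambda=2Cr$, this image is covered by $\mathcal{N}$ balls of radius $\frac{2Cr}{4C}=\frac{r}{2}$ in $H$. Because $L(K)=K$, taking the union over $i$ covers all of $K$, giving $N_{r/2}(K,H)\le \mathcal{N}\,N_{r}(K,H)$.

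Iterating this bound from $R$ produces $N_{R/2^{n}}(K,H)\le \mathcal{N}^{\,n}$; substituting $\varepsilon=R/2^{n}$ into the definition of the fractal dimension, and controlling intermediate values of $\varepsilon$ by the monotonicity of $N_{\varepsilon}$, I would obtain
$$dim_{F}(K,H)\le \limsup_{n\to\infty}\frac{n\log_{2}\mathcal{N}}{n-\log_{2}R}=\log_{2}\mathcal{N}=\mathcal{H}_{\frac{1}{4C}}(B_{V}(0,1),H).$$
The only genuinely delicate point is the recentring step: estimate (\ref{C}) is available only between points of $K$, which forces the factor $2$ in the radius. The constant $\frac{1}{4C}$ in the statement is tuned precisely so that this factor $2$, combined with the Lipschitz constant $C$, produces a clean halving $r\mapsto r/2$ at each stage, and hence the denominator $\log_{2}2=1$ that yields the asserted bound.
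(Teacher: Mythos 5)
The paper offers no proof of this theorem: it is imported verbatim as Zelik's result and used as a black box, the proof residing in \cite{Zelik_2000}. So there is no internal proof to compare against, and your argument must be judged on its own merits. It is correct, and it is in fact the standard iterated-covering proof of this smoothing-implies-finite-dimension principle. The key recursion $N_{r/2}(K,H)\leq \mathcal{N}\,N_{r}(K,H)$, with $\mathcal{N}=N_{\frac{1}{4C}}(B_{V}(0,1),H)$, is established soundly: the recentring of the covering balls at points $k_i\in K$ (costing the factor $2$ in the radius, and needed precisely because (\ref{C}) is only available between points of $K$), the smoothing estimate placing $L\bigl(B_{H}(k_i,2r)\cap K\bigr)$ inside $L(k_i)+2Cr\,B_{V}(0,1)$, the rescaled covering of that set by $\mathcal{N}$ balls of $H$-radius $r/2$, and the surjectivity $L(K)=K$ to recover a covering of all of $K$, all fit together; the final limsup computation, including the dyadic interpolation for intermediate $\varepsilon$, is also correct. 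Two minor points you could make explicit: first, the hypothesis that $V$ is compactly embedded in $H$ is used exactly once, to guarantee $\mathcal{N}<\infty$ (otherwise the stated bound is vacuous), and it deserves a sentence; second, your starting observation needs only that $K$ is bounded, so compactness of $K$ enters the argument solely through that finiteness and boundedness, which is worth noting since it explains why no further regularity of $K$ is required.
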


In the present work, we show (see Theorem \ref{theorem_entropy}) an estimation of the Kolmogorov $\varepsilon$-entropy of $B_{V}(0,1)$ in $H$ where $H$ is a Hilbert space and $V$ is a Sobolev-like subspace of $H$. Then we deduce from  Zelik's result an estimate of the fractal dimension of the  attractor  of some nonlinear parabolic equations in terms of the physical parameters. This result is quite explicit and rather close from the estimate obtained in  \cite{Chepy_Ilyin} under slightly different but quite related assumptions.

\section{Main results}
Let $H$ be a separable Hilbert space with scalar product $(\cdot,\cdot)_H$ and norm $\left\vert \cdot \right\vert_H$. Let $V$ be a dense subspace of  $H$, endowed with a Hilbert structure such that the inclusion map of $V$ into $H$ is compact. Then $H$ is included in $V^{\prime}$ with continuous imbedding. By $\left\Vert \cdot \right\Vert_{V}$ and $( \cdot ,\cdot )_V$ we denote the norm and the scalar product in $V$, respectively. We will denote by
$\langle\cdot,\cdot\rangle$ the duality product between
$V^{\prime} $ and $V$.\\

Let $A\in \mathcal{L}(V,V^{\prime})$ be the duality map: $V\rightarrow V'$. It is a self-adjoint monotone operator such that $A^{-1}\in \mathcal{L}(V^{\prime}, V) \subset \mathcal{L}(H, H)$ is a compact, positive, self-adjoint operator from $H$ to itself. \\

As a consequence of the Hilbert-Schmidt  Theorem  there exists a nondecreasing sequence
of positive real numbers,
\[
0<\lambda_{1}\leq\lambda_{2}\leq...\leq\lambda_j\leq......\text{,}%
\]
with $\lim_{j\to\infty}\lambda_j=+\infty$ and  there exists
an orthonormal basis $\left\{  w_j:j\geq1\right\}$ of $H$ with 
$A\, w_j$ $=$ $\lambda_j \: w_j$ for all $ j$ $\ge$ $1$. The sequence $(\lambda_j)$ is the sequence of eigenvalues repeated according to their multiplicity. \bigskip 

We now assume that $(\lambda_j)$ satisfies the following growth assumption:
\begin{itemize}
\item[(H1)] There exist positive constants $c$ and $\alpha$ such that $$ \lambda_{j} \ge c j^{\alpha}.$$
\end{itemize}

Under the last assumption, the first goal in this section is to prove an estimate of the Kolmogorov $\varepsilon$-entropy of $B_{V}(0,1):=\{ u\in V, \left\Vert u \right\Vert_{V} \leq 1  \}$. In order to do that, we shall identify $H$ with $l^2$ through the identification 
\begin{eqnarray*}
u\longrightarrow \left(u_j \right)_{j\in \mathbb{N}^{*}}
\end{eqnarray*}
where $u= \sum_{j}u_jw_j$.

\begin{theorem}\label{theorem_entropy}
Assume the assumption (H1). Then, the Kolmogorov $\varepsilon$-entropy of $B_{V}(0,1)$ in $H$ satisfies
\begin{eqnarray}\label{main}
\forall \varepsilon>0, \quad \mathcal{H}_{\varepsilon}(B_{V}(0,1),H)< \left(\frac{\log 3+\alpha}{\log 2}\right)\left(\frac{2}{c\varepsilon^2} \right)^{1/\alpha} .
\end{eqnarray}

\end{theorem}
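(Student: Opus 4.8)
The plan is to realize $B_V(0,1)$ as an infinite-dimensional ellipsoid and then reduce the covering problem to a finite-dimensional one, using the decay of the semi-axes forced by (H1). Since $\|u\|_V^2 = \langle Au,u\rangle = \sum_j \lambda_j u_j^2$ under the identification $u \leftrightarrow (u_j)$, the set $B_V(0,1)$ is exactly the ellipsoid $\{u \in \ell^2 : \sum_j \lambda_j u_j^2 \le 1\}$, whose $j$-th semi-axis is $\lambda_j^{-1/2}$. Because these semi-axes tend to $0$, the high-frequency directions are negligible in $H$: for any $N$ and any $u \in B_V(0,1)$ one has $\sum_{j>N} u_j^2 \le \lambda_{N+1}^{-1}\sum_{j>N}\lambda_j u_j^2 \le \lambda_{N+1}^{-1}$. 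First I would use this tail estimate to cut off the high frequencies.

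Concretely, I would fix $\varepsilon>0$, set $n := (2/(c\varepsilon^2))^{1/\alpha}$ and choose the truncation level $N := \lceil n \rceil - 1$, so that $N+1 \ge n$ and hence, by (H1), $\lambda_{N+1} \ge c(N+1)^\alpha \ge cn^\alpha = 2/\varepsilon^2$. The tail estimate then gives $\sum_{j>N} u_j^2 \le \varepsilon^2/2$ for every $u \in B_V(0,1)$. Consequently, approximating the tail by $0$ and the first $N$ coordinates to within Euclidean distance $\varepsilon/\sqrt2$, each point of $B_V(0,1)$ lies within distance $\sqrt{\varepsilon^2/2 + \varepsilon^2/2} = \varepsilon$ of a suitable centre. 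This reduces everything to covering the finite-dimensional ellipsoid $E_N := \{x \in \mathbb{R}^N : \sum_{j=1}^N \lambda_j x_j^2 \le 1\}$ by balls of radius $r := \varepsilon/\sqrt2$, i.e. $N_\varepsilon(B_V(0,1),H) \le N(E_N, r)$, where $N(E_N,r)$ denotes the covering number of $E_N$ in $\mathbb{R}^N$. (When $n \le 1$ there are no low-frequency directions: $B_V(0,1)$ already lies in a single $H$-ball of radius $\lambda_1^{-1/2} < \varepsilon$, so $\mathcal H_\varepsilon = 0$ and the bound is trivial; I may therefore assume $n>1$, whence $1\le N<n$.)

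The heart of the argument is a sharp bound for the covering numbers of a finite-dimensional ellipsoid, namely $N(E_N, r) \le \prod_{j=1}^N (1 + 2\lambda_j^{-1/2}/r)$, which generalizes the classical estimate $(1+2R/r)^N$ for a ball of radius $R$ and is obtained from a volumetric packing argument: a maximal $r$-separated subset of $E_N$ is an $r$-net, and the disjoint $r/2$-balls around its points are contained in a fixed neighbourhood of $E_N$. Inserting $r = \varepsilon/\sqrt2$ together with $\lambda_j \ge cj^\alpha$, and recalling $c\,n^\alpha = 2/\varepsilon^2$, each factor satisfies $1 + 2\lambda_j^{-1/2}/r \le 1 + 2(n/j)^{\alpha/2}$; since $j \le N < n$ we have $(n/j)^{\alpha/2}\ge 1$, and therefore $1 + 2(n/j)^{\alpha/2} \le 3(n/j)^{\alpha/2}$.

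It then remains to sum logarithms. Taking natural logarithms (reading $\log$ as the natural logarithm, so that $\mathcal H_\varepsilon=\log_2 N_\varepsilon=\log N_\varepsilon/\log 2$),
\begin{eqnarray*}
\log N(E_N,r) &\le& N\log 3 + \frac{\alpha}{2}\sum_{j=1}^N \log (n/j)\\
&=& N\log 3 + \frac{\alpha}{2}\big(N\log n - \log N!\big).
\end{eqnarray*}
Using $\log N! \ge N\log N - N$ and then $N\log(n/N) \le N(n/N-1) = n-N$ (strict since $N<n$), the bracket is $<n$, so $\log N(E_N,r) < N\log 3 + \tfrac{\alpha}{2} n < (\log 3 + \alpha)\,n$; dividing by $\log 2$ gives exactly the claimed inequality. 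The hard part will be the ellipsoid covering estimate with the sharp constant: the naive packing–volume computation slightly overshoots $\prod(1+2\lambda_j^{-1/2}/r)$, because the Minkowski sum $E_N \oplus (r/2)B$ is strictly larger than the ellipsoid obtained by enlarging each semi-axis by $r/2$, so one must either invoke the classical sharp ellipsoid entropy estimate or argue directly that the \emph{true} covering number obeys the clean product bound. Everything else is bookkeeping resting on the decay $\lambda_j \ge cj^\alpha$ and on Stirling's inequality.
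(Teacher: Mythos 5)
Your overall strategy---truncate at the frequency scale dictated by (H1), cover the resulting finite-dimensional ellipsoid by a volumetric argument, and finish with Stirling's inequality---is exactly the paper's, and your bookkeeping is correct throughout (the tail estimate, the choice $N=\lceil n\rceil-1$, the factor bound $1+2(n/j)^{\alpha/2}\le 3(n/j)^{\alpha/2}$ for $j\le N<n$, and the strict estimate $N\log(n/N)<n-N$). But there is a genuine gap at the central step: the covering lemma $N(E_N,r)\le\prod_{j=1}^N\bigl(1+2\lambda_j^{-1/2}/r\bigr)$ is asserted, never proved, and you yourself concede that the packing/volume argument does not deliver it, since the Minkowski sum $E_N+\frac{r}{2}\mathbb{B}$ is not contained in the ellipsoid whose semi-axes are enlarged by $r/2$. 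The difficulty is real: (H1) is only a \emph{lower} bound on the eigenvalues, so $E_N$ may have semi-axes $\lambda_j^{-1/2}$ far smaller than $r$ for some $j\le N$, and in that regime no elementary volume comparison yields the clean product bound; ``invoke the classical sharp ellipsoid entropy estimate'' is a citation placeholder, not a proof. As written, the heart of the argument is missing.

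The gap is repaired by a single reordering, and this is precisely what the paper does: enlarge the ellipsoid \emph{before} covering, rather than inside the product. Since $\lambda_j\ge cj^{\alpha}$, you have $E_N\subset\widetilde E_N:=\{x\in\mathbb{R}^N:\sum_{j=1}^N cj^{\alpha}x_j^2\le 1\}$, and the whole point of your choice of $N$ is that \emph{all} semi-axes of $\widetilde E_N$, namely $(cj^{\alpha})^{-1/2}$ for $j\le N<n$, exceed $(cn^{\alpha})^{-1/2}=\varepsilon/\sqrt2=r$. Hence $r\mathbb{B}_N\subset\widetilde E_N$, so $\widetilde E_N+\frac{r}{2}\mathbb{B}_N\subset\frac{3}{2}\widetilde E_N$, and the standard packing argument (a maximal $r$-separated set is an $r$-net, and its $\frac{r}{2}$-balls are disjoint) gives
$$N(E_N,r)\le N(\widetilde E_N,r)\le \frac{\mathrm{vol}\bigl(\tfrac{3}{2}\widetilde E_N\bigr)}{\mathrm{vol}\bigl(\tfrac{r}{2}\mathbb{B}_N\bigr)}=\prod_{j=1}^N\frac{3(cj^{\alpha})^{-1/2}}{r}=\prod_{j=1}^N 3\Bigl(\frac{n}{j}\Bigr)^{\alpha/2},$$
which is exactly the product your logarithmic computation uses (you never actually need the summand $1$ in $1+2a_j/r$). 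With this substitution, the rest of your proof goes through verbatim and yields the theorem. The paper's proof is the same argument in a slightly different order: it first replaces $B_V(0,1)$ by the comparison ellipsoid with semi-axes $(cj^{\alpha})^{-1/2}$, then truncates at the last index where the semi-axis exceeds the covering radius, so that the $\frac{3}{2}$-dilation trick applies directly.
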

\begin{proof}
Let $u\in B_{V}(0,1)$. We observe that  $$u\in B_{V}(0,1)\Longleftrightarrow \displaystyle \sum_{j=1}^{\infty} \lambda_j u_j^2 \leq1.$$ Let $W\subset H$ be the Hilbert space of vectors $u$ for which $\sum_{j}cj^{\alpha}u_j^2<\infty$ with the norm $\left\Vert u \right\Vert_{W}=\left(c\sum_{j}j^{\alpha}u_j^2 \right)^{1/2}$. Then
 $$u\in B_{W}(0,1)\Longleftrightarrow \displaystyle \sum_{j=1}^{\infty} cj^{\alpha} u_j^2 \leq1.$$ Using (H1), we have that $B_{V}(0,1)\subset B_{W}(0,1)$ and therefore
\begin{eqnarray}\label{relation_entropy_V}
\mathcal{H}_{\varepsilon}(B_{V}(0,1),H)\leq\mathcal{H}_{\varepsilon}(B_{W}(0,1),H).
\end{eqnarray}
If we denote $\mu_j=c^{-1}j^{-\alpha}$, we can write $B_{W}(0,1)$ as an ellipsoid given by $$\mathcal{E}=\{ (u_j)_{j=1}^{\infty}\: | \: \displaystyle\sum_{j=1}^{\infty} \frac{u_j^2}{\mu_j}\leq 1 \}.$$
For a given $\varepsilon>0$, let us give first an upper bound for $N_{\sqrt{2}\varepsilon}(\mathcal{E}, H)$. Let $d$ be the smallest integer such that $\mu_{d+1}\leq \varepsilon^2$. We consider the truncated ellipsoid $$\mathcal{\widetilde E}=\{ u\in \mathcal{E}\:  | \: u_j=0 \quad \text{for all}\quad  j\ge d+1 \}.$$
Given  any $\varepsilon$-cover $\{u^1,...,u^N\}$ of $\mathcal{\widetilde E}$, i.e. for each $u\in \mathcal{\widetilde E}$, there exists some $i\in\{1,...,N\}$ such that 
\begin{equation*}
\sum_{j=1}^d(u_j-u_j^i)^2\leq \varepsilon^2.
\end{equation*}
For any $u\in \mathcal{E}$, we have $$\sum_{j=d+1}^{\infty} u_j^2\leq \mu_{d+1} \sum_{j=d+1}^{\infty}\frac{u_j^2}{\mu_j}\leq \varepsilon^2,$$ and hence for some $i\in\{1,...,N\}$, $$\sum_{j=1}^{\infty}( u_j-u_j^i)^2=\sum_{j=1}^{d}( u_j-u_j^i)^2+\sum_{j=d+1}^{\infty}u_j^2\leq 2\varepsilon^2.$$
Therefore, $\{u^1,...,u^N\}$ forms a $\sqrt{2}\varepsilon$-cover of the full ellipsoid $\mathcal{E}$.  We now view $\mathcal{ \widetilde E}$ as a subset of $\mathbb{R}^d$, i.e. $$\mathcal{ \widetilde E}=\{ u\in \mathbb{R}^d\: | \: \displaystyle\sum_{j=1}^{d} \frac{u_j^2}{\mu_j}\leq 1 \},$$ and we prove the inequality \begin{equation}\label{relation_Volume-Entropy}
N_{\varepsilon}(\mathcal{ \widetilde E},\mathbb{R}^d)\leq \frac{\text{vol}(\mathcal{ \widetilde E}+\frac{\varepsilon}{2}\mathbb{B}_d(1))}{\text{vol}(\frac{\varepsilon}{2}\mathbb{B}_d(1))},
\end{equation}
where $\mathbb{B}_d(1)=\{u\in \mathbb{R}^d \: | \: \sum_{j=1}^d \left\vert u_j\right\vert^2 \leq 1\}$. 

\bigskip 
\noindent 
The proof of \eqref{relation_Volume-Entropy} is actually simple: first of all let us consider any finite family of points $ A = \{(a_i)_{i\in J}\}\subset \mathcal{ \widetilde E}$ for which all balls $ B(a_i, \frac{\varepsilon}{2})$ are pairwise disjoint. Then we have $$\bigcup_i B(a_i, \frac{\varepsilon}{2}) \subset \mathcal{ \widetilde E} + \frac{\varepsilon}{2}\mathbb{B}_d(1), $$ 

hence

$$ \text{vol}(\frac{\varepsilon}{2}\mathbb{B}_d(1)) \text{card} ({A}) =  \sum_{i\in J}\text{vol}(B(a_i, \frac{\varepsilon}{2}))
\le \text{vol}(\mathcal {\widetilde E} + \frac{\varepsilon}{2}\mathbb{B}_d(1)).$$
To conclude, it is sufficient to remark that since the cardinality of such finite sets is bounded, we can consider such a set $A$ with maximal cardinality. Then for any $a\not \in A$ in $\mathcal{\widetilde E}$ , the ball $B(a,  \frac{\varepsilon}{2})$ intersects at least one of the balls $B(a_{k},  \frac{\varepsilon}{2}),$ implying $||a-a_k||\le \varepsilon$. It follows that the balls $ B(a_i, \varepsilon)$ with $a_i\in A$ give an $\varepsilon$-covering of $\mathcal {\widetilde E}$. The result follows immediately.

\bigskip 
\noindent Since $\varepsilon^2< \mu_j$ for all $j\in \{1,...,d\}$, we can see that $\mathcal{ \widetilde E}$ contains the ball $\varepsilon\mathbb{B}_d(1)=\{u\in \mathbb{R}^d \: | \: \sum_{j=1}^d \left\vert u_j\right\vert^2 \leq \varepsilon^2\}$,  hence 
\begin{equation}\label{relation_aux}
\text{vol}(\mathcal{ \widetilde E}+\frac{\varepsilon}{2}\mathbb{B}_d(1))\leq \text{vol}(\frac{3}{2}\mathcal{ \widetilde E}).
\end{equation}
From  (\ref{relation_Volume-Entropy}) and (\ref{relation_aux}), we deduce \begin{equation*}
N_{\varepsilon}(\mathcal{ \widetilde E},H)\leq \frac{3^{d}}{\varepsilon^d}\frac{\text{vol}(\mathcal{ \widetilde E})}{\text{vol}(\mathbb{B}_d(1))}.
\end{equation*}
Since the ellipsoid $\mathcal{ \widetilde E}$ is the image of the  the unit ball  by the linear transform $$ (x_1, ...,x_d ) \longrightarrow (\sqrt{\mu_1}{x_1}, ...,\sqrt{\mu_d}{x_d})$$   it follows classically that  $$\frac{\text{vol}(\mathcal{ \widetilde E})}{\text{vol}(\mathbb{B}_d(1))}=\prod_{j=1}^d\sqrt{\mu_j}$$ and we can deduce that
\begin{equation*}
N_{\varepsilon}(\mathcal{ \widetilde E},H)\leq \frac{3^{d}}{\varepsilon^d}\prod_{j=1}^d(cj^{\alpha})^{-1/2}=\frac{3^{d}}{\varepsilon^d}\left(\frac{1}{\sqrt{c}} \right)^d\left(d! \right)^{-\alpha/2}\leq \frac{3^{d}}{\varepsilon^d}\left(\frac{1}{\sqrt{c}} \right)^d\left(\frac{d}{e} \right)^{-\frac{\alpha}{2}d},
\end{equation*}
using the fact that $\mu_j=c^{-1}j^{-\alpha}$ and the elementary inequality $\displaystyle d! \ge \left(\frac{d}{e} \right)^d$.

Since $\mu_{d+1}\leq \varepsilon^2$, we deduce that 
\begin{equation*}
N_{\varepsilon}(\mathcal{ \widetilde E},H)\leq 3^{d}\left(\frac{d+1}{d} \right)^{\frac{\alpha}{2}d}e^{\frac{\alpha}{2}d}\leq 3^{d}e^{\alpha d},
\end{equation*}
and therefore
\begin{equation*}
\mathcal{H}_{\varepsilon}(\mathcal{ \widetilde E},H)\leq d\left(\frac{\log 3+\alpha}{\log 2}\right).
\end{equation*}
Since $\varepsilon^2< \mu_d=c^{-1}d^{-\alpha}$, we have $d< c^{-1/\alpha}\varepsilon^{-2/\alpha}$, and we obtain
\begin{equation*}
\mathcal{H}_{\varepsilon}(\mathcal{ \widetilde E},H)< \left(\frac{\log 3+\alpha}{\log 2}\right)\left(\frac{1}{c\varepsilon^2} \right)^{1/\alpha}.
\end{equation*}
Then we deduce \begin{equation*}
\mathcal{H}_{\varepsilon\sqrt2}(\mathcal{E},H)< \left(\frac{\log 3+\alpha}{\log 2}\right)\left(\frac{1}{c\varepsilon^2} \right)^{1/\alpha}.
\end{equation*}  So that   by an obvious change of notation\begin{equation*}
\mathcal{H}_{\varepsilon}(\mathcal{E},H)< \left(\frac{\log 3+\alpha}{\log 2}\right)\left(\frac{2}{c\varepsilon^2} \right)^{1/\alpha}.
\end{equation*} and  (\ref{relation_entropy_V}) completes the proof.
\end{proof}

\begin{remark}
This upper bound is rather sharp: for a lower bound of the entropy, we observe that the ellipsoid $\mathcal{E}$ contains the truncated ellipsoid $\mathcal{ \widetilde E}$, which contains the ball $\varepsilon\mathbb{B}_d(1)$. Then, we have
\begin{equation}\label{lower}
N_{\varepsilon \over 2}(\mathcal{E},H)\ge N_{\varepsilon \over 2}(\varepsilon\mathbb{B}_d(1),H)\ge 2^d,
\end{equation}
as a consequence of the obvious inequality 
$$ \text{card} ({A}){\rm vol}({\varepsilon \over 2}\mathbb{B}_d(1))\ge {\rm vol}(\varepsilon\mathbb{B}_d(1))= 2^d {\rm vol}({\varepsilon \over 2}\mathbb{B}_d(1))$$ valid for any ${\varepsilon \over 2}$- covering of $ \varepsilon\mathbb{B}_d(1)$ in $H$ with centers forming the set $A$. Indeed the orthogonal projections in $H$ of the covering balls on the $d$-dimensional space are covering balls of the projection (equal to $ \varepsilon\mathbb{B}_d(1)$) with centers in the $d$-dimensional space and the matter is reduced to $d$ dimensions. 
When $ \lambda_{j} \le C j^{\alpha},$ we can deduce
\begin{eqnarray}\label{relation2_entropy_V}
\mathcal{H}_{\varepsilon}(B_{V}(0,1),H)\ge\mathcal{H}_{\varepsilon}(\mathcal{E},H).
\end{eqnarray}
Since $\varepsilon^2\ge \mu_{d+1}=C^{-1}(d+1)^{-\alpha} $, we have $d\ge C^{-1/\alpha}\varepsilon^{-2/\alpha}-1$, and from (\ref{lower}), we obtain
\begin{equation*}
\mathcal{H}_{\varepsilon \over 2}(\mathcal{E},H)\ge \left(\frac{1}{C\varepsilon^2} \right)^{1/\alpha}-1,
\end{equation*}  
and by an obvious change of notation
\begin{equation*}
\mathcal{H}_{\varepsilon}(\mathcal{E},H)\ge \left(\frac{1}{4C\varepsilon^2} \right)^{1/\alpha}-1.
\end{equation*}  
Therefore, from (\ref{relation2_entropy_V}), we obtain
\begin{equation*}
\mathcal{H}_{\varepsilon}(B_{V}(0,1),H)\ge \left(\frac{1}{4C\varepsilon^2} \right)^{1/\alpha}-1.
\end{equation*}  which is not so far from (\ref{main}).
\end{remark}

Now, consider
\begin{eqnarray}\label{equation_g}
Au+g(u)=\lambda u,
\end{eqnarray}
where $g:V\rightarrow V^{\prime}$ is a continuous nondecreasing function and $\lambda$ is a positive constant.
\\

We define by $\mathcal{C}$ the set of equilibria of (\ref{equation_g}) and we consider the identity map $I:\mathcal{C}\rightarrow \mathcal{C}$.

The second goal in this section is to estimate the fractal dimension of $\mathcal{C}$. First, we prove the following result.
\begin{proposition}\label{proposition_estimate}
For all $u,v\in \mathcal{C}$, 
\begin{eqnarray*}
\left\Vert u-v \right\Vert_{V} \leq \sqrt{\lambda} \left\vert u-v \right\vert_{H}.
\end{eqnarray*}
\end{proposition}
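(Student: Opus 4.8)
The plan is to exploit the monotonicity of $g$ together with the fact that $A$, being the duality map from $V$ into $V'$, satisfies $\langle Aw,w\rangle=\left\Vert w\right\Vert_V^2$ for every $w\in V$. First I would record the two equilibrium equations: since $u,v\in\mathcal{C}$, both $Au+g(u)=\lambda u$ and $Av+g(v)=\lambda v$ hold in $V'$. Subtracting them gives the identity
\[
A(u-v)+g(u)-g(v)=\lambda(u-v),
\]
understood as an equality in $V'$.

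The decisive step is to pair this identity with the test element $u-v\in V$ in the duality product $\langle\cdot,\cdot\rangle$ between $V'$ and $V$. The first term $\langle A(u-v),u-v\rangle$ collapses to $\left\Vert u-v\right\Vert_V^2$ by the defining property of the duality map. On the right-hand side, since $u-v\in H$ and $H$ embeds continuously into $V'$, the pairing $\langle u-v,u-v\rangle$ coincides with the scalar product $(u-v,u-v)_H=\left\vert u-v\right\vert_H^2$. Hence the test computation produces
\[
\left\Vert u-v\right\Vert_V^2+\langle g(u)-g(v),u-v\rangle=\lambda\left\vert u-v\right\vert_H^2.
\]

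To finish, I would invoke the hypothesis that $g$ is nondecreasing, i.e.\ monotone, so that $\langle g(u)-g(v),u-v\rangle\ge 0$. Discarding this nonnegative term yields $\left\Vert u-v\right\Vert_V^2\le\lambda\left\vert u-v\right\vert_H^2$, and taking square roots gives the claim. I do not expect a genuine obstacle: the argument is a single monotonicity estimate. The only point demanding a little care is the bookkeeping of the duality pairings---confirming that $\langle Aw,w\rangle$ reproduces the $V$-norm and that the embedding $H\hookrightarrow V'$ turns $\langle u-v,u-v\rangle$ into $\left\vert u-v\right\vert_H^2$---after which everything reduces to a one-line computation.
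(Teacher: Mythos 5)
Your proposal is correct and follows exactly the paper's own argument: subtract the two equilibrium equations, pair with $u-v$ in the $V'$--$V$ duality, use $\langle A(u-v),u-v\rangle=\left\Vert u-v\right\Vert_V^2$ (the duality-map property) and the monotonicity $\langle g(u)-g(v),u-v\rangle\ge 0$, then take square roots. The paper compresses these steps into two lines, but the mathematical content is identical; your version merely makes explicit the bookkeeping of the pairings that the paper leaves to the reader.
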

\begin{proof}
Let $u$ and $v$ belong to $\mathcal{C}$ and set $u-v$, where $u$ and $v$ are solutions to (\ref{equation_g}). Then, we obtain
\begin{eqnarray*}
\left\langle A u-A v,u-v \right\rangle+\left\langle g(u)-g(v),u-v \right\rangle=\lambda \left\vert u-v \right\vert^2_{H}.
\end{eqnarray*}
Since  $g$ is non-decreasing, the conclusion follows easily.
\end{proof}
\\

Finally, using Theorems \ref{Zelik_2000} and \ref{theorem_entropy} together with Proposition \ref{proposition_estimate}, we deduce the following result.
\begin{theorem}\label{theorem_dimension}
Assume the assumption (H1). Then, any compact subset  $K \subset \mathcal{C}$ has a finite fractal dimension with
\begin{equation*}
dim_{F} \, K < \left(\frac{\log 3+\alpha}{\log 2}\right)\left(\frac{32 \lambda}{c} \right)^{1/\alpha}.
\end{equation*}
\end{theorem}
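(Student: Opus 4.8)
The plan is to derive the statement as a direct synthesis of the three preceding results, taking the identity map as the map $L$ in Zelik's abstract theorem. First I would set $L := I$, the identity map on $K$. Since $K \subset \mathcal{C}$ and $I$ sends $\mathcal{C}$ into itself, we have $L(K) = K$, so the surjectivity requirement $L(K)=K$ in Theorem \ref{Zelik_2000} is satisfied; the compact embedding of $V$ into $H$ and the compactness of $K$ in $H$ are in force by the standing hypotheses.

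Next I would verify the smoothing property (\ref{C}). For arbitrary $k_1,k_2 \in K$ we have $k_1,k_2 \in \mathcal{C}$, so Proposition \ref{proposition_estimate} gives
\begin{equation*}
\left\Vert L(k_1)-L(k_2) \right\Vert_{V} = \left\Vert k_1-k_2 \right\Vert_{V} \leq \sqrt{\lambda}\left\vert k_1-k_2 \right\vert_{H},
\end{equation*}
so that (\ref{C}) holds with the explicit constant $C=\sqrt{\lambda}$. Applying Theorem \ref{Zelik_2000} then yields
\begin{equation*}
dim_{F}(K,H) \leq \mathcal{H}_{\frac{1}{4\sqrt{\lambda}}}\left(B_{V}(0,1),H\right).
\end{equation*}

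Finally I would invoke Theorem \ref{theorem_entropy} at $\varepsilon = \frac{1}{4\sqrt{\lambda}}$. Since $\varepsilon^2 = \frac{1}{16\lambda}$, one computes $\frac{2}{c\varepsilon^2} = \frac{32\lambda}{c}$, and the estimate (\ref{main}) delivers exactly the claimed bound. In particular the finiteness of $dim_{F}\,K$ is automatic once the right-hand side of (\ref{main}) is finite, which it is under (H1).

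As for difficulty, there is no genuine obstacle here: the argument is essentially a bookkeeping assembly of the earlier results. The two points deserving attention are confirming that the identity map legitimately plays the role of $L$ (in particular that it is surjective onto $K$, which is immediate since $I(K)=K$), and carefully propagating the constant through the substitution $\varepsilon=\frac{1}{4\sqrt{\lambda}}$, so that the factor $32$ in $\frac{32\lambda}{c}$ emerges correctly as $2\cdot 16$.
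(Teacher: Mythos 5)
Your proposal is correct and is exactly the argument the paper intends: the identity map on $\mathcal{C}$ (introduced just before Proposition \ref{proposition_estimate}) serves as $L$ in Theorem \ref{Zelik_2000} with constant $C=\sqrt{\lambda}$, and Theorem \ref{theorem_entropy} evaluated at $\varepsilon=\frac{1}{4\sqrt{\lambda}}$ gives $\frac{2}{c\varepsilon^{2}}=\frac{32\lambda}{c}$, yielding the stated bound. The paper leaves this assembly implicit ("using Theorems \ref{Zelik_2000} and \ref{theorem_entropy} together with Proposition \ref{proposition_estimate}, we deduce..."), and your write-up fills in precisely those steps.
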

\begin{remark}
As we shall see in the next section, in the applications to concrete elliptic equations, the function $ N(\lambda ) = \min\left\{ n\in \mathbb{N}, \lambda_{n+1}\ge  \lambda \right\} $ behaves like some positive power of $\lambda$  for large values of $\lambda$ . The following example now shows that for general monotonic maps $g$, the estimate given by Theorem \ref{theorem_dimension} is optimal up to a multiplicative constant in such a case, therefore essentially optimal as far as the growth as a function of $\lambda$ is concerned and a general monotone map $g$ is allowed. Let us consider $\lambda>0$, $n\in \mathbb{N}$ such that $ \lambda_n< \lambda \le \lambda_{n+1}$ and set 
$$ g(u): = \sum _{j=1}^n (\lambda- \lambda_{j}) P_j(u),$$ where $P_j$ is the orthogonal projection from $H$ to the eigenspace of $A$ corresponding to the eigenvalue $\lambda_{j}$. Now the equation
 $$ Au + g(u)= \lambda u $$ reduces to 
 $$ Au = \sum _{j=1}^n  \lambda_{j} P_j(u) +\lambda( u - \sum _{j=1}^n  P_j(u)) , $$ so that 
 $$X_n = \bigoplus _{j = 1}^nP_j(H)  = \bigoplus _{j=1}^n \ker(A- \lambda_{j}I)\subset  \mathcal{C}.$$
 Consequently, in this case $\mathcal{C}$ contains a vector space of dimension 
 $$ d= card \left\{ j\in \mathbb{N}^*, \lambda> \lambda_j \right\} = N(\lambda).$$ In particular for the unit ball $K$ of this finite dimensional space, which is a compact subset of $\mathcal{C}$ we find $$ dim_{F} K\, \ge N(\lambda).
$$ 
When $ \lambda_{k} \le C k^{\alpha}, $ then $N(\lambda) \ge \left(\frac{\lambda}{C} \right)^{1/\alpha} -1 .$ This confirms the optimality of the upper estimate up to a constant for $\lambda$ large.

\end{remark}

\section{Application to some elliptic equations}
Let $\Omega \subset \mathbb{R}^N$, $N\ge 1$, be a bounded open domain with  sufficiently smooth boundary. 
We denote by $ \left(  \cdot,\cdot\right)$ the inner product in $L^2(\Omega)$, and by $\left| \cdot\right|_{L^{2}\left(\Omega\right)}$ the associated norm. By $\left|\left|
\cdot \right|\right|_{H^1_0(\Omega)}$ we denote the norm in $H^1_0(\Omega)$, which is associated to the inner product $\left( \left( \cdot,\cdot \right)\right):=\left(\nabla\cdot , \nabla \cdot\right).$ We will denote by
$\langle\cdot,\cdot\rangle$ the duality product between
$H^{-1}\left(  \Omega\right) $ and $H_{0}^{1}\left(  \Omega\right)
$. By $\left\Vert \cdot \right\Vert_{L^{\infty}(\Omega)}$ we denote the norm in $L^{\infty}(\Omega)$.\\

Let $A=-\Delta_D$ with $\Delta_D$ the Dirichlet Laplacian on $\Omega$. We denote by $\lambda_1$ the first eigenvalue of the $A$. Let $\lambda_j$ denote the j$^{\text{th}}$ eigenvalue of $\Omega$ for the Dirichlet boundary problem. We use the estimate (see Li and Yau \cite{Li_Yau} for more details)
\begin{equation}\label{Li_Yau_formula}
\lambda_j\ge \frac{NC_N}{N+2}j^{2/N}V^{-2/N},
\end{equation}
where $V$ is the volume of $\Omega$, and $C_N=(2\pi)^2B_N^{-2/N}$, with $B_N=$ volume of the unit $N$-ball.

Taking into account (\ref{Li_Yau_formula}), in particular, we have (H1) with $\alpha=\frac{2}{N}$ and $c=\displaystyle \frac{4\pi N}{N+2} (\Gamma(1+\frac{N}{2}))^{2/N}\left\vert \Omega \right\vert^{-2/N}$, where $\left\vert \Omega \right\vert$ denotes the $N$-dimensional measure of $\Omega$. As a consequence we find the following result.
\begin{corollary}  Let $g$ be any nondecreasing continuous function of the real variable $s$ with super-linear growth at infinity. Let  $\mathcal{C}$ be the set of solutions of the equation 

$$ u \in H^1_0(\Omega)\cup L^{\infty}(\Omega); \quad -\Delta u + g(u) = \lambda u .$$

Then $\mathcal{C}$ is  compact with a finite fractal dimension such that  
\begin{equation*} dim_{F} \, \mathcal{C} < 32^{N/2}\left(\frac{\log 3+\frac{2}{N}}{\log 2}\right)\left(\frac{N+2}{4\pi N} \right)^{N/2} \frac{1}{\Gamma\left(1+N/2 \right)}\left\vert \Omega \right\vert\lambda^{N/2}.
\end{equation*}
\end{corollary}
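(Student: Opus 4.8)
The plan is to read off the abstract framework in this concrete case and then reduce everything to Theorem \ref{theorem_dimension}, the only genuinely new analytic ingredient being the compactness of $\mathcal{C}$. Here $H=L^2(\Omega)$, $V=H^1_0(\Omega)$ and $A=-\Delta_D$, so the required compact embedding $V\hookrightarrow H$ is Rellich--Kondrachov, and, as recorded above, the Li--Yau bound \eqref{Li_Yau_formula} gives (H1) with $\alpha=2/N$ and $c=\tfrac{4\pi N}{N+2}(\Gamma(1+N/2))^{2/N}\left\vert\Omega\right\vert^{-2/N}$. The equation $-\Delta u+g(u)=\lambda u$ is exactly \eqref{equation_g}, and since $g$ is nondecreasing the smoothing estimate \eqref{C} for the identity map $I:\mathcal{C}\to\mathcal{C}$ (with constant $C=\sqrt\lambda$) is already furnished by Proposition \ref{proposition_estimate} and is built into Theorem \ref{theorem_dimension}. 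Thus the whole statement will follow once I know that $\mathcal{C}$ is itself a compact subset of $H$, so that Theorem \ref{theorem_dimension} may be applied with $K=\mathcal{C}$.

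The main step is therefore the compactness of $\mathcal{C}$ in $L^2(\Omega)$. First I would establish a uniform $L^\infty$ bound: because $g$ is nondecreasing with superlinear growth at infinity, there exist finite numbers $a<b$ with $g(s)-\lambda s<0$ for $s<a$ and $g(s)-\lambda s>0$ for $s>b$. Testing the equation against $(u-b)^+\in H^1_0(\Omega)$ gives
\begin{equation*}
\int_\Omega \left\vert\nabla (u-b)^+\right\vert^2 + \int_\Omega \bigl(g(u)-\lambda u\bigr)(u-b)^+ = 0 ,
\end{equation*}
where both terms are nonnegative, forcing $(u-b)^+=0$; testing against $(a-u)^+$ gives the matching lower bound. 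Hence every solution satisfies $a\le u\le b$ a.e., so $\mathcal{C}$ is bounded in $L^\infty(\Omega)$ by $R:=\max(\left\vert a\right\vert,\left\vert b\right\vert)$. Substituting this into the energy identity $\|u\|_{H^1_0}^2=\lambda\,|u|_{L^2}^2-\int_\Omega g(u)u$ and using $\left\vert g(u)\right\vert\le\sup_{|s|\le R}\left\vert g(s)\right\vert$ bounds $\mathcal{C}$ in $H^1_0(\Omega)$, so $\mathcal{C}$ is precompact in $L^2(\Omega)$ by Rellich. Closedness is then a standard weak-limit argument: if $u_n\in\mathcal{C}$ and $u_n\to u$ in $L^2$, then along a subsequence $u_n\rightharpoonup u$ in $H^1_0$ and $g(u_n)\to g(u)$ in $L^2$ by dominated convergence (using the uniform $L^\infty$ bound and continuity of $g$), and passing to the limit in the weak formulation shows $u\in\mathcal{C}$. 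Precompact together with closed yields that $\mathcal{C}$ is compact.

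With compactness in hand I would apply Theorem \ref{theorem_dimension} with $K=\mathcal{C}$ to get
\begin{equation*}
dim_{F}\,\mathcal{C} < \left(\frac{\log 3+\alpha}{\log 2}\right)\left(\frac{32\lambda}{c}\right)^{1/\alpha},
\end{equation*}
and then substitute $\alpha=2/N$ (so $1/\alpha=N/2$) and the value of $c$. Writing $c^{-N/2}=\left(\tfrac{N+2}{4\pi N}\right)^{N/2}\tfrac{1}{\Gamma(1+N/2)}\left\vert\Omega\right\vert$ converts the right-hand side into the claimed
\begin{equation*}
32^{N/2}\left(\frac{\log 3+\frac{2}{N}}{\log 2}\right)\left(\frac{N+2}{4\pi N}\right)^{N/2}\frac{1}{\Gamma(1+N/2)}\left\vert\Omega\right\vert\,\lambda^{N/2}.
\end{equation*}
The main obstacle is the uniform $L^\infty$ a priori bound: this is precisely where the superlinearity of $g$ is indispensable, since without it $g(u)$ cannot be controlled in the energy identity and the dominated-convergence step in the closedness argument fails; once this bound is secured, the remainder is the bookkeeping of plugging the Li--Yau constants into Theorem \ref{theorem_dimension}.
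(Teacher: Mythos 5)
Your proposal is correct and takes essentially the same route as the paper: the paper's entire proof consists of asserting that compactness of $\mathcal{C}$ is an immediate consequence of the super-linear growth of $g$ and then applying Theorem \ref{theorem_dimension} with $K=\mathcal{C}$, exactly as you do. Your truncation argument for the uniform $L^\infty$ bound, the $H^1_0$ energy bound with Rellich, and the weak-limit closedness step simply supply the details behind the paper's one-line compactness claim, and your substitution of $\alpha=2/N$ and the Li--Yau constant $c$ reproduces the stated bound.
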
\begin{proof} Compactness is an immediate consequence of super-linear growth at infinity. Then it is sufficient to apply Theorem \ref{theorem_dimension} with  $K = \mathcal{C}$
\end{proof}

\section{Application to parabolic equations}
Now, we consider the following problem
\begin{eqnarray}\label{example2_general}
u_t-\Delta u+g(u)=\lambda u \text{\ \ in \ } \Omega,
\end{eqnarray}
with the zero Dirichlet boundary condition,
\begin{eqnarray}\label{example2_boundary_general}
u=0 \text{\ \ on \ } \partial \Omega,
\end{eqnarray}
and the initial condition
\begin{eqnarray}\label{example2_initial_general}
u(x,0)=u_0(x), \text{\ for \ } x\in \Omega,
\end{eqnarray}
where $\lambda$ is a positive constant and $g\in C^1(\mathbb{R})$ is a non-decreasing function. We assume that the non-linear term $g$ satisfies a dissipativity assumption of the form
\begin{eqnarray}\label{condition2_g}
g(s)s\ge\beta \left\vert s \right\vert^p \text{\ \ \ \ } \forall s\in\mathbb{R},
\end{eqnarray}
and the following growth restriction of the derivative
\begin{eqnarray}\label{condition3_g}
\left\vert g(s_1)-g(s_2)\right\vert \leq \gamma\, M^{p-2} \left\vert s_1-s_2 \right\vert  \text{\ \ for \ \ }  |s_1|,|s_2|\leq M,
\end{eqnarray}
for some $\beta>0$, $\gamma>0$, $M>0$ and $p> 2$. A typical example of a function satisfying the previous conditions is $g(s)=\beta \left\vert s \right\vert^{p-2}s$, with $p> 2.$ In this case we may take $\gamma = \beta(p-1)$.
\\

We define a semigroup $\{ S(t),t\ge 0 \}$ in $L^2(\Omega)$ by\begin{eqnarray}\label{def_semigroup}
S(t)u_0=u(t;0,u_0) \text{\ \ \ } \forall u_0 \in L^2(\Omega), \text{\ \ \ } \forall t\ge0,
\end{eqnarray}
where $u(t;0,u_0)$ is the unique solution of (\ref{example2_general})-(\ref{example2_initial_general}). We denote by $\mathcal{A}$ the global attractor associated with the semigroup $S$ defined by (\ref{def_semigroup}).
\\

Our aim is to estimate the fractal dimension of $\mathcal{A}$. First, we need the following results.
\begin{proposition}\label{estimSolutionL2_general}
Assume (\ref{condition2_g}). Then the attractor $\mathcal{A}$ associated with (\ref{example2_general})-(\ref{example2_initial_general}) is bounded in $L^{2}(\Omega)$. More concretely, there exists a positive constant $C(p,\beta,\lambda,\Omega)$ such that
\begin{eqnarray*}
\left\vert a \right\vert_{L^{2}(\Omega)}\leq C, \text{\ \ \ \ } \forall a\in \mathcal{A}.
\end{eqnarray*} 
\end{proposition}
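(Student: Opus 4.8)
The plan is to produce a uniform-in-time \emph{a priori} bound in $L^2(\Omega)$ along trajectories, which yields an absorbing ball of explicit radius; since $\mathcal{A}$ is compact and invariant under the semigroup (\ref{def_semigroup}), it must lie inside this ball, giving the stated explicit bound on $\left\vert a\right\vert_{L^2(\Omega)}$. Note that compactness already makes $\mathcal{A}$ bounded in the phase space, so the real content here is the \emph{explicit} constant $C(p,\beta,\lambda,\Omega)$.

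First I would take the $L^2(\Omega)$ scalar product of (\ref{example2_general}) with $u$. Using the zero Dirichlet condition to integrate the Laplacian by parts gives the energy identity
$$\frac{1}{2}\frac{d}{dt}\left\vert u\right\vert^2_{L^2(\Omega)} + \left\Vert u\right\Vert^2_{H^1_0(\Omega)} + \left(g(u),u\right) = \lambda\left\vert u\right\vert^2_{L^2(\Omega)}.$$
The dissipativity hypothesis (\ref{condition2_g}) bounds the nonlinear term from below by $\left(g(u),u\right)\ge\beta\left\vert u\right\vert^p_{L^p(\Omega)}$.

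The crux is to absorb the destabilising term $\lambda\left\vert u\right\vert^2_{L^2(\Omega)}$ by this coercive $L^p$ term, which is possible precisely because $p>2$. By Young's inequality in the form $\lambda s^2\le\frac{\beta}{2}s^p+C_0$ for all $s\ge0$, with $C_0=C_0(p,\beta,\lambda)$ found by maximising $\lambda s^2-\frac{\beta}{2}s^p$ over $s\ge0$, and integrating over $\Omega$, one obtains $\lambda\left\vert u\right\vert^2_{L^2(\Omega)}\le\frac{\beta}{2}\left\vert u\right\vert^p_{L^p(\Omega)}+C_0\left\vert\Omega\right\vert$. Substituting this, discarding the nonnegative remainder $\frac{\beta}{2}\left\vert u\right\vert^p_{L^p(\Omega)}$, and applying the Poincar\'e inequality $\left\Vert u\right\Vert^2_{H^1_0(\Omega)}\ge\lambda_1\left\vert u\right\vert^2_{L^2(\Omega)}$, I reach a closed differential inequality $\frac{d}{dt}y+2\lambda_1 y\le 2C_0\left\vert\Omega\right\vert$ for $y(t)=\left\vert u(t)\right\vert^2_{L^2(\Omega)}$.

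Gronwall's lemma then gives $y(t)\le y(0)e^{-2\lambda_1 t}+\frac{C_0\left\vert\Omega\right\vert}{\lambda_1}$, so any ball $B$ of radius exceeding $(C_0\left\vert\Omega\right\vert/\lambda_1)^{1/2}$ is absorbing in $L^2(\Omega)$. To conclude, since $\mathcal{A}$ is compact it is a bounded subset of the phase space, hence $S(t)\mathcal{A}\subset B$ for $t$ large; the invariance $S(t)\mathcal{A}=\mathcal{A}$ then forces $\mathcal{A}\subset B$, i.e. $\left\vert a\right\vert_{L^2(\Omega)}\le C$ for every $a\in\mathcal{A}$, with $C=C(p,\beta,\lambda,\Omega)$ read off from $C_0$ and $\lambda_1$. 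The only genuinely delicate point is justifying the formal testing with $u$, that is, that solutions carried by $\mathcal{A}$ are regular enough for the energy identity to hold rigorously; this is standard for this parabolic problem and can be handled by a Galerkin approximation or by using the known smoothing of the semigroup. Everything else is a routine dissipative estimate.
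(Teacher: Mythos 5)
Your proof is correct and follows essentially the same route as the paper's: test the equation with $u$, use (\ref{condition2_g}) together with a pointwise Young-type bound to absorb $\lambda\left\vert u\right\vert^2_{L^2(\Omega)}$, apply Poincar\'e and Gronwall to get a uniform asymptotic bound, and conclude by invariance of $\mathcal{A}$ (with its compactness supplying the a priori boundedness needed to kill the $e^{-2\lambda_1 t}$ term). The only differences are cosmetic: the paper maximises $\lambda s^2-\beta s^p$ with the full coefficient $\beta$, giving the explicit constant $\frac{2(p-2)}{p^2\beta\lambda_1}\left\vert\Omega\right\vert\lambda^{p/(p-2)}$, and phrases the last step as fixing $a=S(t)u_0$ and letting $t\to\infty$ rather than as an absorbing-ball statement.
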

\begin{proof}
Multiplying (\ref{example2_general}) by $u$,
\begin{eqnarray*}
\frac{1}{2}\frac{d}{dt}\left(\left\vert u \right\vert^2_{L^2(\Omega)} \right)+\left\vert \nabla u\right\vert^2_{L^2(\Omega)}=\left(\lambda u-g(u),u \right).
\end{eqnarray*}
Using (\ref{condition2_g}) and Young's inequality applied with the conjugate exponents $\frac{p}{2}$ and $\frac{p}{p-2}$, we have
\begin{eqnarray*}
\left(\lambda u-g(u),u \right)\leq \displaystyle \int_{\Omega}\left(\lambda u^2-\beta \left\vert u \right\vert ^p \right)dx\leq \frac{2(p-2)}{p^2}\frac{1}{\beta}\left\vert \Omega \right\vert \lambda^{\frac{p}{p-2}},
\end{eqnarray*}
then, using the Poincar\' e inequality, we obtain
\begin{eqnarray*}
\frac{d}{dt}\left(\left\vert u \right\vert^2_{L^2(\Omega)} \right)+2\lambda_1\left\vert u\right\vert^2_{L^2(\Omega)}\leq \frac{4(p-2)}{p^2}\frac{1}{\beta}\left\vert \Omega \right\vert \lambda^{\frac{p}{p-2}}.
\end{eqnarray*}
Multiplying by $e^{2\lambda_1 t}$ and integrating between $0$ and $t$, we obtain
\begin{eqnarray*}
\left\vert u(t) \right\vert^2_{L^2(\Omega)} \leq e^{-2\lambda_1 t}\left\vert u_0 \right\vert^2_{L^2(\Omega)}+\frac{2(p-2)}{p^2 }\frac{1}{\beta \lambda_1}\left\vert \Omega \right\vert \lambda^{\frac{p}{p-2}}\left(1-e^{-2\lambda_1 t} \right).
\end{eqnarray*}
We observe that if $u(t)\in \mathcal{A}$, then there exists $u_0\in \mathcal{A}$ such that $u(t)=S(t)u_0$. Then, we have
\begin{eqnarray*}
\left\vert S(t)u_0 \right\vert^2_{L^2(\Omega)} \leq e^{-2\lambda_1 t}\left\vert u_0 \right\vert^2_{L^2(\Omega)}+\frac{2(p-2)}{p^2}\frac{1}{\beta \lambda_1}\left\vert \Omega \right\vert \lambda^{\frac{p}{p-2}}\left(1-e^{-2\lambda_1 t} \right).
\end{eqnarray*}
Fix $t>0$, and consider $a\in \mathcal{A}$. Then, there exists $u_0\in \mathcal{A}$ such that $a=S(t)u_0$, and we have
\begin{eqnarray*}
\left\vert a \right\vert^2_{L^2(\Omega)} \leq e^{-2\lambda_1 t}\left\vert u_0 \right\vert^2_{L^2(\Omega)}+\frac{2(p-2)}{p^2}\frac{1}{\beta \lambda_1}\left\vert \Omega \right\vert \lambda^{\frac{p}{p-2}}\left(1-e^{-2\lambda_1 t} \right).
\end{eqnarray*}
If $t$ tends to $+\infty$, we obtain
\begin{eqnarray*}
\left\vert a \right\vert^2_{L^2(\Omega)} \leq \frac{2(p-2)}{p^2}\frac{1}{\beta \lambda_1}\left\vert \Omega \right\vert \lambda^{\frac{p}{p-2}} \text{\ \ \ \ } \forall a\in\mathcal{A}.
\end{eqnarray*}\end{proof}

Taking into account Proposition \ref{estimSolutionL2_general}, we prove the following result
\begin{proposition}\label{estimSolution_general}
Assume (\ref{condition2_g}). Then the attractor $\mathcal{A}$ associated with (\ref{example2_general})-(\ref{example2_initial_general}) is uniformly bounded in $L^{\infty}(\Omega)$. More precisely,
\begin{eqnarray}\label{estimate_attractor_g}
\left\Vert a \right\Vert_{L^{\infty}(\Omega)}\leq \left(\frac{\lambda}{\beta}\right)^{\frac{1}{p-2}}, \text{\ \ \ \ } \forall a\in \mathcal{A}.
\end{eqnarray} 
\end{proposition}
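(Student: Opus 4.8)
The plan is to prove the pointwise bound by a Stampacchia-type truncation against the threshold $K:=\left(\frac{\lambda}{\beta}\right)^{1/(p-2)}$, which is exactly the constant appearing in \eqref{estimate_attractor_g}. The motivation is that the constant function $\bar u\equiv K$ is a supersolution: from $g(K)K\ge\beta K^p$ we get $g(K)\ge\beta K^{p-1}=\lambda K$, so $-\Delta\bar u+g(\bar u)\ge\lambda\bar u$, while on $\partial\Omega$ the solution satisfies $u=0<K$. To turn this into the sharp estimate I would, for a trajectory $u(t)=S(t)u_0$, test \eqref{example2_general} with the positive part $(u-K)_+$. Since $u\in H^1_0(\Omega)$ and $K>0$, we have $(u-K)_+\in H^1_0(\Omega)$, so the boundary contribution vanishes and the diffusion term yields $\int_\Omega|\nabla(u-K)_+|^2\,dx\ge0$.

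This produces the energy identity
\[
\frac12\frac{d}{dt}\left|(u-K)_+\right|^2_{L^2(\Omega)}+\left|\nabla(u-K)_+\right|^2_{L^2(\Omega)}+\int_\Omega\bigl(g(u)-\lambda u\bigr)(u-K)_+\,dx=0 .
\]
The key step is that the last integral is nonnegative: on the set $\{u>K\}$ one has $u\ge K>0$, so \eqref{condition2_g} gives $g(u)\ge\beta u^{p-1}=\beta u^{p-2}\,u\ge\beta K^{p-2}u=\lambda u$ by the very choice of $K$ and the fact that $p-2>0$; hence $\bigl(g(u)-\lambda u\bigr)(u-K)_+\ge0$ pointwise. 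Dropping this term and invoking the Poincaré inequality $\left|\nabla\varphi\right|^2_{L^2(\Omega)}\ge\lambda_1\left|\varphi\right|^2_{L^2(\Omega)}$ for $\varphi=(u-K)_+\in H^1_0(\Omega)$, I obtain $\frac{d}{dt}\left|(u-K)_+\right|^2_{L^2(\Omega)}+2\lambda_1\left|(u-K)_+\right|^2_{L^2(\Omega)}\le0$, and Grönwall's lemma yields the exponential decay $\left|(u(t)-K)_+\right|^2_{L^2(\Omega)}\le e^{-2\lambda_1t}\left|(u_0-K)_+\right|^2_{L^2(\Omega)}$.

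To conclude I would exploit the invariance of the attractor. Given $a\in\mathcal{A}$ and any $t>0$, there is $u_0\in\mathcal{A}$ with $a=S(t)u_0$; since $(u_0-K)_+\le|u_0|$ pointwise, Proposition \ref{estimSolutionL2_general} bounds $\left|(u_0-K)_+\right|_{L^2(\Omega)}$ uniformly, so letting $t\to+\infty$ forces $\left|(a-K)_+\right|_{L^2(\Omega)}=0$, i.e. $a\le K$ a.e. Applying the same argument to $-u$, which solves an equation of the same form with the nondecreasing nonlinearity $\tilde g(s)=-g(-s)$ still satisfying \eqref{condition2_g}, gives $a\ge -K$ a.e., whence $\left\Vert a\right\Vert_{L^\infty(\Omega)}\le K$. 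The main obstacle is the rigorous justification of the truncation: one must verify that the weak solution has enough regularity (typically $u\in L^2(0,T;H^1_0(\Omega))$ with $u_t$ in a suitable dual space) for $(u-K)_+$ to be an admissible test function and for the identity $\int_\Omega u_t(u-K)_+\,dx=\frac12\frac{d}{dt}\left|(u-K)_+\right|^2_{L^2(\Omega)}$ to hold. This is standard for semilinear problems with monotone $g$, but it should be invoked explicitly, and the symmetric treatment of the negative part must be carried out to reach the two-sided bound.
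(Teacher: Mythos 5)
Your proposal is correct and follows essentially the same argument as the paper's proof: truncation at the threshold $M=\left(\lambda/\beta\right)^{1/(p-2)}$, testing with $(u-M)_+$, the Poincar\'e inequality and Gr\"onwall to get exponential decay, then invariance of $\mathcal{A}$ together with the $L^2$ bound of Proposition \ref{estimSolutionL2_general} and letting $t\to+\infty$ to conclude $a\le M$, with a symmetric treatment of the negative part (the paper phrases this via $(u+M)_-$, you via the equivalent substitution $\tilde g(s)=-g(-s)$). Your explicit verification that $g(s)\ge\lambda s$ on $\{s\ge M\}$ and the remark on admissibility of the test function only make explicit what the paper leaves implicit.
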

\begin{proof}
Using (\ref{condition2_g}), we observe
\begin{eqnarray}\label{condition_positive}
g(s)-\lambda s\ge 0 \text{\ \ \ \ if \ \ \ }s\ge M:=\left(\frac{\lambda}{\beta} \right)^{\frac{1}{p-2}}.
\end{eqnarray}
Let $u\in L^2(\Omega)$, we define
\begin{equation*}
u_{+}(x)=\left\{
\begin{array}
[c]{l}%
u(x)\text{, \ if }u(x)>0\text{,}\\
0\text{, in other case,}%
\end{array}
\right.  \label{u+}%
\end{equation*}%
and
\begin{equation*}
u_{-}(x)=\left\{
\begin{array}
[c]{l}%
u(x)\text{, \ if }u(x)<0\text{,}\\
0\text{, in other case.}%
\end{array}
\right.  \label{u-}%
\end{equation*}
Multiplying (\ref{example2_general}) by $\left(u(x)-M \right)_{+}$, taking into account (\ref{condition_positive}) and using the Poincar\' e inequality, we have
\begin{eqnarray*}
\frac{d}{dt}\left(\left\vert \left(u-M \right)_{+}\right\vert^2_{L^2(\Omega)} \right) \leq -2\lambda_1\left\vert \left(u-M \right)_{+}\right\vert^2_{L^2(\Omega)}.
\end{eqnarray*}
Multiplying by $e^{2\lambda_1 t}$ and integrating between $0$ and $t$, we obtain
\begin{eqnarray*}
\left\vert \left(u(t)-M \right)_{+}\right\vert^2_{L^2(\Omega)}  \leq e^{-2\lambda_1t}\left\vert \left(u_0-M \right)_{+}\right\vert^2_{L^2(\Omega)}.
\end{eqnarray*}
We observe that if $u(t)\in \mathcal{A}$, then there exists $u_0\in \mathcal{A}$ such that $u(t)=S(t)u_0$. Then, we have
\begin{eqnarray*}
\displaystyle \int_{\Omega}(S(t)u_0-M)_+^2dx\leq e^{-2\lambda_1t} \int_{\Omega}(u_0-M)_+^2dx.
\end{eqnarray*}
As $\mathcal{A}$ is bounded in $L^2(\Omega)$, then we can deduce that there exists a positive constant $\widehat{C}(p,\beta,\lambda,\Omega)$, which is independent of $u_0$, such that
\begin{eqnarray*}
\displaystyle \int_{\Omega}(u_0-M)_+^2dx\leq \int_{\Omega}(u_0-M)^2dx\leq 2\left(\left\vert u_0 \right\vert^2_{L^2(\Omega)}+M^2\left\vert \Omega \right\vert \right)\leq \widehat{C},
\end{eqnarray*}
and, we have
\begin{eqnarray*}
\displaystyle \int_{\Omega}(S(t)u_0-M)_+^2dx\leq \widehat{C}e^{-2\lambda_1 t}.
\end{eqnarray*}
Fix $t>0$, and consider $a\in \mathcal{A}$. Then, there exists $u_0\in \mathcal{A}$ such that $a=S(t)u_0$, and we have
\begin{eqnarray*}
0\leq\displaystyle \int_{\Omega}(a-M)_+^2dx\leq \widehat{C}e^{-2\lambda_1 t}.
\end{eqnarray*}
If $t$ tends to $+\infty$, we obtain
\begin{eqnarray*}
\displaystyle \int_{\Omega}(a-M)_+^2dx=0 \text{\ \ \ \ } \forall a\in\mathcal{A},
\end{eqnarray*}
then
\begin{eqnarray*}
(a-M)_+=0 \Longrightarrow a\leq M \text{\ \ \ } \forall a\in \mathcal{A}.
\end{eqnarray*}
We use a similar reasoning for $\left(u+M\right)_{-}$, and then we have 
\begin{eqnarray*}
\left\Vert a \right\Vert_{L^{\infty}(\Omega)}\leq M \text{\ \ \ \ } \forall a\in \mathcal{A}.
\end{eqnarray*}
\end{proof}

Now, we define 
\begin{eqnarray}\label{def_C_gamma}
 C_{\gamma,\beta}:=\displaystyle\frac{1}{2+\frac{\gamma}{\beta}},
\end{eqnarray}
and we consider the map $\displaystyle S\left(\frac{C_{\gamma,\beta}}{\lambda} \right):\mathcal{A}\rightarrow \mathcal{A}$. Taking into account Proposition \ref{estimSolution_general}, we prove the following result.
\begin{proposition}\label{proposition_estimate_parabolic}
Assume (\ref{condition2_g}) and (\ref{condition3_g}). Then, for all $u_0,v_0\in \mathcal{A}$,
\begin{eqnarray*}
  \left\Vert  S\left(\frac{C_{\gamma,\beta}}{\lambda }\right)u_0-S\left(\frac{C_{\gamma,\beta}}{\lambda }\right)v_0 \right\Vert_{H_0^1(\Omega)}\leq   \sqrt{2\lambda\left(1+{\gamma \over \beta} \right)} \left\vert u_0-v_0\right\vert_{L^2(\Omega)},
\end{eqnarray*}
where $C_{\gamma,\beta}$ is given by (\ref{def_C_gamma}).
\end{proposition}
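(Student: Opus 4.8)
The plan is to prove the stated inequality as a parabolic smoothing estimate, which is exactly the smoothing property (\ref{C}) required to apply Theorem \ref{Zelik_2000} with $V=H_0^1(\Omega)$, $H=L^2(\Omega)$ and $C=\sqrt{2\lambda(1+\gamma/\beta)}$. Write $u(t)=S(t)u_0$, $v(t)=S(t)v_0$ and $w=u-v$; then $w$ solves $w_t-\Delta w=\lambda w-(g(u)-g(v))=:f$ with $w(0)=u_0-v_0$. First I would record that, since $\mathcal{A}$ is invariant, both orbits stay in $\mathcal{A}$ for all $t\ge 0$, so Proposition \ref{estimSolution_general} gives $\left\Vert u(t)\right\Vert_{L^\infty(\Omega)},\left\Vert v(t)\right\Vert_{L^\infty(\Omega)}\le M=(\lambda/\beta)^{1/(p-2)}$. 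Hence $M^{p-2}=\lambda/\beta$, and the growth restriction (\ref{condition3_g}) upgrades to the global bound $\left\vert g(u)-g(v)\right\vert\le (\gamma\lambda/\beta)\left\vert w\right\vert$ along the orbits, whence $\left\vert f(t)\right\vert_{L^2(\Omega)}\le \lambda(1+\gamma/\beta)\left\vert w(t)\right\vert_{L^2(\Omega)}$. This is the origin of the factor $(1+\gamma/\beta)$.

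Next I would run the standard $L^2$ energy estimate. Testing the equation for $w$ with $w$ and using that $g$ is nondecreasing, so that $(g(u)-g(v),w)\ge 0$, yields $\frac{d}{dt}\left\vert w\right\vert_{L^2(\Omega)}^2+2\left\Vert w\right\Vert_{H_0^1(\Omega)}^2\le 2\lambda\left\vert w\right\vert_{L^2(\Omega)}^2$. Integrating this over $[0,t_0]$, together with a Gronwall argument, controls both $\sup_{[0,t_0]}\left\vert w(t)\right\vert_{L^2(\Omega)}^2$ and $\int_0^{t_0}\left\Vert w\right\Vert_{H_0^1(\Omega)}^2\,dt$ in terms of $\left\vert u_0-v_0\right\vert_{L^2(\Omega)}^2$; in particular one gets $\int_0^{t_0}\left\Vert w\right\Vert_{H_0^1(\Omega)}^2\le \tfrac12\left\vert u_0-v_0\right\vert_{L^2(\Omega)}^2+\lambda\int_0^{t_0}\left\vert w\right\vert_{L^2(\Omega)}^2$.

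The crux is the $H_0^1$ bound at the single time $t_0=C_{\gamma,\beta}/\lambda$. Since there is no a priori control of $\left\Vert w(0)\right\Vert_{H_0^1(\Omega)}$ — recovering such control from the $L^2$ datum is precisely the smoothing we are proving — I would use the time-weighted multiplier $t(-\Delta w)$. Testing $w_t-\Delta w=f$ with $-\Delta w$ and applying Young's inequality gives $\frac{d}{dt}\left\Vert w\right\Vert_{H_0^1(\Omega)}^2\le \left\vert f\right\vert_{L^2(\Omega)}^2$, hence $\frac{d}{dt}\bigl(t\left\Vert w\right\Vert_{H_0^1(\Omega)}^2\bigr)\le \left\Vert w\right\Vert_{H_0^1(\Omega)}^2+t\left\vert f\right\vert_{L^2(\Omega)}^2$. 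Integrating over $[0,t_0]$ annihilates the boundary term at $t=0$, yielding $t_0\left\Vert w(t_0)\right\Vert_{H_0^1(\Omega)}^2\le \int_0^{t_0}\left\Vert w\right\Vert_{H_0^1(\Omega)}^2+K^2\int_0^{t_0}t\left\vert w\right\vert_{L^2(\Omega)}^2$, where $K=\lambda(1+\gamma/\beta)$ bounds $\left\vert f\right\vert_{L^2(\Omega)}$. Inserting the two $L^2$ estimates from the previous step finishes the derivation.

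The main obstacle is purely quantitative: choreographing these bounds so that the right-hand side collapses to exactly $2\lambda(1+\gamma/\beta)\left\vert u_0-v_0\right\vert_{L^2(\Omega)}^2$ after dividing by $t_0$. This is where the specific value $C_{\gamma,\beta}=1/(2+\gamma/\beta)$ enters: it is chosen precisely so that the contributions of $\int_0^{t_0}\left\Vert w\right\Vert_{H_0^1(\Omega)}^2$, of $\lambda\int_0^{t_0}\left\vert w\right\vert_{L^2(\Omega)}^2$, and of the weighted source term $K^2\int_0^{t_0}t\left\vert w\right\vert_{L^2(\Omega)}^2$ balance against the factor $1/t_0=K+\lambda$ produced by the time weight. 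I expect no conceptual difficulty beyond carefully tracking these constants; the qualitative content — an $L^2$-to-$H_0^1$ smoothing estimate driven by the invariance-plus-$L^\infty$ bound and by the monotonicity of $g$ — is already fixed by the three steps above.
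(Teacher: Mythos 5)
Your proposal is correct and is essentially the paper's own proof: the same three ingredients appear in the same roles, namely the $L^\infty$ bound on $\mathcal{A}$ from Proposition \ref{estimSolution_general}, which upgrades (\ref{condition3_g}) to the global Lipschitz bound $|g(u)-g(v)|\le (\gamma\lambda/\beta)|u-v|$ along orbits (so $|f|_{L^2(\Omega)}\le \lambda(1+\gamma/\beta)W^{1/2}$ with $W(t)=|w(t)|^2_{L^2(\Omega)}$), the monotonicity-based $L^2$ energy inequality $\tfrac12 W'+V\le\lambda W$ with $V(t)=|\nabla w(t)|^2_{L^2(\Omega)}$, and a second energy inequality for $V'$ exploited through a time weight. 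The only structural deviation is the multiplier in that second step: the paper tests with $\partial_t w$ and then uses the identity $V(t)=\tfrac1t\int_0^t V(s)\,ds+\tfrac1t\int_0^t sV'(s)\,ds$, whereas you test with $-\Delta w$ and integrate $\tfrac{d}{dt}\bigl(tV\bigr)\le V+t|f|^2_{L^2(\Omega)}$; these are the same computation in different clothing. The one point you must still nail down is the constant, and your version as stated makes it needlessly delicate: your Young inequality gives $V'\le |f|^2_{L^2(\Omega)}\le \lambda^2(1+\gamma/\beta)^2W$, a factor $2$ worse than the paper's $V'\le\tfrac12\lambda^2(1+\gamma/\beta)^2W$, which the paper gets by absorbing the full $|\partial_t w|^2_{L^2(\Omega)}$ and which you would recover from the sharper split $(f,-\Delta w)\le |\Delta w|^2_{L^2(\Omega)}+\tfrac14|f|^2_{L^2(\Omega)}$. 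With your weaker differential inequality the target constant $2\lambda(1+\gamma/\beta)$ is still attainable, but only by evaluating the integrals exactly and with an uncomfortably thin margin (at the extreme admissible case $\gamma=\beta$, i.e. $C_{\gamma,\beta}=1/3$, one gets roughly $3.97\,\lambda W(0)$ against the target $4\lambda W(0)$); the coarse simplifications the paper uses, such as $e^{2C_{\gamma,\beta}}/(2C_{\gamma,\beta})\le\tfrac34 e^{2/3}(1+\gamma/\beta)$, would then overshoot. So either carry out the exact evaluation, or (better) use the sharp Young constant, after which your argument reproduces the paper's inequality verbatim and the bookkeeping closes comfortably.
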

\begin{proof}
Let $u$ and $v$ belong to $\mathcal{A}$ and set $w=u-v$ and $w_0=u_0-v_0$, where $u$ and $v$ are solutions to (\ref{example2_general})-(\ref{example2_boundary_general}) with initial data $u_0$ and $v_0$, respectively. Then, we obtain
\begin{eqnarray}\label{example2_1_g}
w_t-\Delta w+g(u)-g(v)=\lambda w \text{\ \ in \ } \Omega,
\end{eqnarray}
\begin{eqnarray*}
w=0 \text{\ \ on \ } \partial \Omega,
\end{eqnarray*}
\begin{eqnarray*}
w(x,0)=u_0(x)-v_0(x), \text{\ for \ } x\in \Omega.
\end{eqnarray*}
We denote by $W(t):=\left\vert w(t) \right\vert^2_{L^2(\Omega)}$ and by $V(t):=\left\vert \nabla w(t) \right\vert^2_{L^2(\Omega)}$. Multiplying (formally) (\ref{example2_1_g}) by $w$ and taking into account that 
\begin{eqnarray*}
 \displaystyle\int_{\Omega} (g(u)-g(v))wdx\ge0,
\end{eqnarray*}
we obtain
\begin{eqnarray*}\label{deriv}
\frac{1}{2}W'(t)+V(t)\leq \lambda W(t).
\end{eqnarray*} 
Multiplying by $2 e^{-2\lambda t}$, we obtain
\begin{eqnarray*}\label{estim0_g}
\left(e^{-2\lambda t}  W(t)\right)'+2e^{-2\lambda t}V(t) \leq 0.
\end{eqnarray*} 
Integrating between $0$ and $t$, we obtain
\begin{eqnarray*}
e^{-2\lambda t}W(t)+2\int_{0}^{t}e^{-2\lambda s}V(s)ds \leq W(0),
\end{eqnarray*}
yielding
\begin{eqnarray}\label{estim1_g}
W(t)+2\int_{0}^{t}V(s)ds \leq e^{2\lambda t}W(0).
\end{eqnarray}
Now, multiplying (formally) (\ref{example2_1_g}) by $\frac{\partial w}{\partial t}$, we obtain
\begin{eqnarray*}
\left\vert \frac{\partial w}{\partial t} \right\vert ^2_{L^2(\Omega)}+\frac{1}{2}V'(t)+\int_{\Omega} (g(u)-g(v)) \frac{\partial w}{\partial t} dx
=\lambda (w, \frac{\partial w}{\partial t}).
\end{eqnarray*}
We note that, owing to (\ref{condition3_g}), (\ref{estimate_attractor_g}) and H\"older's inequality,
\begin{eqnarray*}
\left\vert \displaystyle\int_{\Omega} (g(u)-g(v))\frac{\partial w}{\partial t}dx\right\vert  \leq  {\gamma \over \beta} \lambda \int_{\Omega} \left\vert w \right\vert \left\vert \frac{\partial w}{\partial t}\right\vert dx
 \leq   \frac{\gamma}{\beta} \lambda W(t)^{1/2}\left\vert \frac{\partial w}{\partial t}\right\vert_{L^2(\Omega)},
 \end{eqnarray*}
and by H\"older's inequality
 \begin{eqnarray*}
\lambda (w,\frac{\partial w}{\partial t})\leq \lambda  W(t)^{1/2}\left\vert \frac{\partial w}{\partial t}\right\vert_{L^2(\Omega)}.
\end{eqnarray*}
Then, by Young's  inequality  we have
\begin{eqnarray*}
\frac{1}{2}V'(t)\leq  {1\over 4}\lambda^2 \left(1+{\gamma \over \beta} \right)^2 W(t).
\end{eqnarray*}

Then, using (\ref{estim1_g}), we obtain
\begin{eqnarray}\label{estimDeriva}
V'(t) \le e^{2\lambda t}{1\over 2}\lambda^2 \left(1+{\gamma \over \beta} \right)^2W(0).
\end{eqnarray} 
Using the equality
$$ V(t) =  \frac{1}{t}\int _0^t V(s) ds  + \frac{1}{t}\int _0^t s\,V' (s) ds,$$ and from (\ref{estim1_g}) and (\ref{estimDeriva}), we deduce 
$$ V(t) \le \lambda W(0)\left[\frac{e^{2\lambda t}}{2\lambda t}+{1\over 4}\left(1+{\gamma \over \beta} \right)^2{2\lambda t\,e^{2\lambda t}-(e^{2\lambda t}-1) \over 2\lambda t}\right] .$$

Taking $t=\displaystyle \frac{C_{\gamma,\beta}}{\lambda}$, where $C_{\gamma, \beta}$ is given by (\ref{def_C_gamma}), we finally deduce from the above inequality an inequality of the form $$ V\left(\frac{C_{\gamma,\beta}}{\lambda}\right)  \le \widetilde{C}_{\gamma,\beta}\, \lambda W(0),$$ with 

\begin{equation*}
\widetilde{C}_{\gamma,\beta}= \frac{e^{2C_{\gamma,\beta}}}{2C_{\gamma,\beta}}  + {C_{\gamma,\beta}\over 2}\left(1+\frac{\gamma}{\beta} \right)^2,
\end{equation*}
where we have used that $\displaystyle {(2C_{\gamma,\beta}-1)e^{2C_{\gamma,\beta}}+1 \over 2C_{\gamma,\beta}}\leq 2C_{\gamma,\beta}$ for $C_{\gamma,\beta}\leq{1\over 2}$.

Finally, we estimate $\widetilde{C}_{\gamma,\beta}$. Taking into account that $\displaystyle {2 \over 3}\left(1+{\gamma \over \beta}\right)^{-1}\leq C_{\gamma,\beta}\leq {1\over 3}$ for the first term and using that $C_{\gamma,\beta}\leq {1 \over 1+{\gamma \over \beta} }$ for the second one, we can deduce
\begin{equation*}
\widetilde{C}_{\gamma,\beta} \leq \left({3\over 4}e^{2\over 3}+{1 \over 2} \right)\left(1+\frac{\gamma}{\beta} \right).
\end{equation*}
\end{proof}
\\

Finally, using Theorems \ref{Zelik_2000} and \ref{theorem_entropy} together with Proposition \ref{proposition_estimate_parabolic} and (\ref {Li_Yau_formula}), we deduce the following result.
\begin{proposition}\label{theorem_dimension_parabolic}
Assume (\ref{condition2_g})-(\ref{condition3_g}). Then, the global attractor $\mathcal{A}$ associated with (\ref{example2_general})-(\ref{example2_initial_general}) has finite fractal dimension in $L^2(\Omega)$, and satisfies 
\begin{eqnarray*}
dim_{F} \mathcal{A} < 8^{N}\left(\frac{\log 3+\frac{2}{N}}{\log 2}\right)\left(\frac{N+2}{4\pi N} \right)^{N/2} \frac{1}{\Gamma\left(1+N/2 \right)}\left\vert \Omega \right\vert\left(1+\frac{\gamma}{\beta} \right)^{N/2}\lambda^{N/2}.
\end{eqnarray*}
\end{proposition}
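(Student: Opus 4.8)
The plan is to recognize Proposition \ref{theorem_dimension_parabolic} as a direct application of Zelik's abstract criterion (Theorem \ref{Zelik_2000}), with the smoothing map furnished by Proposition \ref{proposition_estimate_parabolic} and the entropy estimate supplied by Theorem \ref{theorem_entropy} specialized to the Dirichlet Laplacian through the Li--Yau bound (\ref{Li_Yau_formula}). I would first fix the abstract data: take $H=L^2(\Omega)$, $V=H^1_0(\Omega)$, and $A=-\Delta_D$. The embedding $H^1_0(\Omega)\hookrightarrow L^2(\Omega)$ is compact by Rellich--Kondrachov, and since $\|u\|_{H^1_0(\Omega)}^2=|\nabla u|_{L^2(\Omega)}^2=\langle Au,u\rangle=\sum_j\lambda_j u_j^2$, the $H^1_0$-norm coincides with the abstract $V$-norm of Theorem \ref{theorem_entropy}, so the two frameworks agree. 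The global attractor $\mathcal{A}$ is compact in $L^2(\Omega)$ and strictly invariant, whence for $K=\mathcal{A}$ the map $L:=S(C_{\gamma,\beta}/\lambda)$ satisfies $L(K)=K$; the $L^\infty$-bound (\ref{estimate_attractor_g}) of Proposition \ref{estimSolution_general} is what legitimizes the use of the growth restriction (\ref{condition3_g}) along the attractor in the first place.

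Next I would read off the smoothing constant. Proposition \ref{proposition_estimate_parabolic} yields exactly the inequality (\ref{C}) required by Zelik, with $C=\sqrt{2\lambda(1+\gamma/\beta)}$. Applying Theorem \ref{Zelik_2000} then gives
$$dim_{F}(\mathcal{A},L^2(\Omega))\le \mathcal{H}_{\frac{1}{4C}}\bigl(B_V(0,1),L^2(\Omega)\bigr).$$
To estimate the right-hand side I would invoke Theorem \ref{theorem_entropy}. Its hypothesis (H1) holds for the Dirichlet eigenvalues by the Li--Yau inequality (\ref{Li_Yau_formula}), with $\alpha=2/N$ and $c=\frac{4\pi N}{N+2}\bigl(\Gamma(1+N/2)\bigr)^{2/N}|\Omega|^{-2/N}$, as recorded just before the corollary.

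The final step is a substitution and a constant computation. With $\varepsilon=\frac{1}{4C}$ one has $\varepsilon^2=\frac{1}{32\lambda(1+\gamma/\beta)}$, hence $\frac{2}{c\varepsilon^2}=\frac{64\lambda(1+\gamma/\beta)}{c}$. Raising this to the power $1/\alpha=N/2$ and using $64^{N/2}=8^N$ together with the explicit form of $c$ (which contributes the factors $\left(\frac{N+2}{4\pi N}\right)^{N/2}$, $\frac{1}{\Gamma(1+N/2)}$, and $|\Omega|$) gives precisely the claimed bound, the prefactor $\frac{\log 3+\alpha}{\log 2}$ becoming $\frac{\log 3+2/N}{\log 2}$. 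I expect no conceptual obstacle at this stage; the only delicate point is the constant-chasing, in particular correctly distributing the power $N/2$ over the several factors hidden inside $c$ and verifying that the exponent $-2/N$ on $\Gamma(1+N/2)$ and the exponent $-2/N$ on $|\Omega|$ become $-1$ and $+1$ respectively after raising to the $N/2$ power. The genuinely substantive inputs—the dissipative $L^\infty$-estimate, the $H^1_0$-smoothing bound, and the sharp volumetric entropy estimate for ellipsoids—are all already established upstream, so the proof reduces to assembling them and performing this bookkeeping.
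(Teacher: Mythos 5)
Your proposal is correct and follows exactly the route the paper intends: it applies Theorem \ref{Zelik_2000} with $H=L^2(\Omega)$, $V=H^1_0(\Omega)$, $K=\mathcal{A}$, $L=S(C_{\gamma,\beta}/\lambda)$, reads off $C=\sqrt{2\lambda(1+\gamma/\beta)}$ from Proposition \ref{proposition_estimate_parabolic}, and evaluates the entropy bound of Theorem \ref{theorem_entropy} at $\varepsilon=\frac{1}{4C}$ with the Li--Yau constants $\alpha=2/N$ and $c=\frac{4\pi N}{N+2}(\Gamma(1+N/2))^{2/N}|\Omega|^{-2/N}$, so that $\bigl(\frac{2}{c\varepsilon^2}\bigr)^{N/2}=64^{N/2}c^{-N/2}\bigl(\lambda(1+\gamma/\beta)\bigr)^{N/2}=8^N\bigl(\frac{N+2}{4\pi N}\bigr)^{N/2}\frac{|\Omega|}{\Gamma(1+N/2)}\bigl(1+\frac{\gamma}{\beta}\bigr)^{N/2}\lambda^{N/2}$. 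The constant bookkeeping, including $64^{N/2}=8^N$ and the cancellation of the exponents $2/N$ on $\Gamma(1+N/2)$ and $|\Omega|$, matches the stated bound.
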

\begin{remark}
This result is substantially weaker than the estimate obtained in Theorem 3.1. in \cite{Chepy_Ilyin}, but to obtain it we do not need any regularity hypothesis on $g$ stronger than $C^1$ .
\end{remark}
\begin{remark}
We presently do not know if (\ref{condition3_g}) is really needed for our method to be employed. In particular the factor  $ \left(1+\frac{\gamma}{\beta} \right)^{N/2}$ does not appear in the estimate of  \cite{Chepy_Ilyin} and the result of Theorem \ref{theorem_dimension} even suggests that local compactness of the attractor might be a sufficient condition for its fractal dimension  to be finite. This aspect seems to have been overlooked systematically in the literature until now and might be an interesting track of research for the future.
\end{remark}

\subsection*{Acknowledgments}
Mar\'ia Anguiano has been supported by Junta de Andaluc\'ia (Spain), Proyecto de Excelencia P12-FQM-2466, and in part by  European Commission, Excellent Science-European Research Council (ERC) H2020-EU.1.1.-639227.

\end{document}